\numberwithin{equation}{section}
\DeclareMathOperator{\Aut}{Aut}
\begin{document}
\newcommand{\s}{\vspace{0.2cm}}

\newtheorem{theo}{Theorem}
\newtheorem{prop}{Proposition}
\newtheorem{coro}{Corollary}
\newtheorem{lemm}{Lemma}
\newtheorem{claim}{Claim}
\newtheorem{example}{Example}
\theoremstyle{remark}
\newtheorem{rema}{\bf Remark}
\newtheorem{defi}{\bf Definition}

\title[On Riemann surfaces of genus $g$ with $4g-4$ automorphisms]{On Riemann surfaces of genus $g$ \\with $4g-4$ automorphisms}
\date{}

\author{Sebasti\'an Reyes-Carocca}
\address{Departamento de Matem\'atica y Estad\'istica, Universidad de La Frontera, Avenida Francisco Salazar 01145, Temuco, Chile.}
\email{sebastian.reyes@ufrontera.cl}

\thanks{Partially supported by Fondecyt Grant 11180024, Redes Etapa Inicial Grant 2017-170071 and Anillo ACT1415 PIA-CONICYT Grant}
\keywords{Riemann surface, Group action, Fuchsian group, Jacobian variety}
\subjclass[2010]{14H30, 30F35, 14H37, 14H40}

\begin{abstract} In this article we study compact Riemann surfaces with a non-large group of automorphisms of maximal order; namely, compact Riemann surfaces of genus $g$ with a group of automorphisms of order $4g-4.$ Under the assumption that $g-1$ is prime, we provide a complete classification of them and determine isogeny decompositions of the corresponding Jacobian varieties.
\end{abstract}
\maketitle

%%%%%%%%%%%%%%%%%%
%%%%%%%%%%%%%%%%%%
%%%%%%%%%%%%%%%%%%
%%%%%%%%%%%%%%%%%%
\section{Introduction and statement of the results}

The classification of finite groups of automorphisms of compact Riemann surfaces is a classical and interesting problem which has attracted a considerable interest, going back to contributions of Wiman, Klein, Schwartz and Hurwitz, among others. 

\s

It is classically known that the full automorphism group of a compact Riemann surface of genus $g \geqslant 2$ is finite, and that its order is bounded by $84g-84.$ This bound is sharp for infinitely many values of $g,$ and a Riemann surface with this maximal number of automorphisms is characterized as a branched regular cover of the projective line with three branch values, marked with 2, 3 and 7.

\s
A result due to Wiman asserts that the largest cyclic group of automorphisms of a  Riemann surface of genus $g \geqslant 2$ has order at most $4g+2.$ Moreover, the curve \begin{equation} \label{eWiman}y^2=x^{2g+1}-1\end{equation}shows that this upper bound is sharp for each genus; see \cite{Wi} and \cite{Harvey1}. 

Besides, Accola and Maclachlan independently proved that for fixed $g \geqslant 2$ the maximum among the orders of the full automorphism group of Riemann surfaces of genus $g$ is at least $8g+8.$ Moreover, the curve \begin{equation} \label{eAccola}y^2=x^{2g+2}-1\end{equation}shows that this lower bound is sharp for each genus; see \cite{Accola} and \cite{Mac}.

\s

An interesting problem is to understand the extent to which the order of the full automorphism group determines the Riemann surface. In this regard, Kulkarni proved that for $g$ sufficiently large the curve \eqref{eWiman} is the unique Riemann surface with an automorphism of order $4g+2,$ and for $g \not\equiv -1 \mbox{ mod } 4$ sufficiently large the curve \eqref{eAccola} is the unique Riemann surface with $8g+8$ automorphisms; see \cite{K1}.
\s

Let $a,b \in \mathbb{Z}.$ Following \cite{Kis}, the sequence $ag+b$ for $g=2,3, \ldots$ is called {\it admissible} if for infinitely many values of $g$ there is a compact Riemann surface of genus $g$ with a group of automorphisms of order $ag+b.$ 

In addition to the mentioned admissible sequences $84g-84, 4g+2$ and $8g+8,$ very recently the cases $4g+4$ and $4g$ have been studied; see \cite{BCI}, \cite{CI} and also \cite{yojpaa}.

\s

Let $a \geqslant 7.$ The admissible sequence $ag-a$ has been considered by Belolipetsky and Jones in \cite{BJ}. Concretely, they succeeded in proving that under the assumption that $g-1$ is a sufficiently large prime number, a compact Riemann surface of genus $g$ with a group of automorphisms of order $ag-a$ lies in one of six infinite well-described sequences of examples. The cases $a=5$ and $a=6$ have been recently classified by Izquierdo and the author in \cite{IRC}.

\s
All the aforementioned cases are examples of compact Riemann surfaces possessing a so-called {\it large} group of automorphisms: namely, a group of automorphisms of order greater than $4g-4,$ where $g$ is the genus. In this case, it is known that the Riemann surface is either quasiplatonic (it does not admit non-trivial deformations in the moduli space  with its automorphisms) or belongs to a complex one-dimensional family in such a way that the signature of the action is  $$(0; 2,2,2,n) \mbox{ for } n \geqslant3 \mbox{ or } (0; 2,2,3,n) \mbox{ for } 3 \leqslant n \leqslant 5.$$

Riemann surfaces with large groups of automorphisms have been considered from different points of view; see, for example, \cite {CK}, \cite{K2}, \cite{AB}, \cite{Mat},  \cite{si3}, \cite{Swi} and \cite{Wi}.

\s

In this article we consider compact Riemann surfaces admitting a non-large group of automorphisms of maximal order; concretely, we study and classify those compact Riemann surfaces of genus $g$ with a group of automorphisms of order $4g-4,$ under the assumption that $g-1$ is a prime number. 

\s

\begin{theo}\label{teo1} Let $g \geqslant8$ such that $g-1$ is prime, and let $S$ be a compact Riemann surface of genus $g$ with a group of automorphisms of order $4g-4$.

If $g\equiv 0 \mbox{ mod } 4$ then $S$ belongs to $\bar{\mathcal{F}}_g^2$, where $\bar{\mathcal{F}}_g^2$ is the complex two-dimensional equisymmetric family of  compact Riemann surfaces of genus $g$ with a group of automorphisms $G_2$ isomorphic to the dihedral group $$\langle r,s : r^{2(g-1)}=s^2=(sr)^2=1\rangle,$$ such that the signature of the action of $G_2$  is $(0; 2, 2, 2, 2, 2).$

If $g \equiv 2 \mbox{ mod } 4$ then $S$ belongs to either $\bar{\mathcal{F}}_g^2$ or $\bar{\mathcal{F}}_g^1,$ where $\bar{\mathcal{F}}_g^1$ is the complex one-dimensional equisymmetric family of compact Riemann surfaces of genus $g$ with a group of automorphisms $G_1$ isomorphic to  $$\langle a,b : a^{g-1}=b^4=1, bab^{-1}=a^{r} \rangle,$$where $r$ is a $4$-th primitive root of the unity in the field of $g-1$ elements,  such that the signature of the action of $G_1$  is $(0; 2, 2, 4, 4).$ 
\end{theo}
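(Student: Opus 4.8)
The plan is to combine the Riemann--Hurwitz formula with the structure theory of groups of order $4(g-1)$, exploiting that $p:=g-1$ is a large prime. Writing $G$ for the group of order $4p$ acting on $S$ with signature $(\gamma; m_1,\dots,m_r)$, the Riemann--Hurwitz relation $2g-2=|G|\big(2\gamma-2+\sum_{i=1}^r(1-\tfrac{1}{m_i})\big)$ together with $|G|=2(2g-2)$ immediately yields
\[
2\gamma-2+\sum_{i=1}^{r}\Big(1-\tfrac{1}{m_i}\Big)=\tfrac12 .
\]
Since every hyperbolic contribution $1-1/m_i$ is at least $1/2$, this forces $\gamma=0$. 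First I would dispose of $\gamma\ge 1$: the only borderline case $\gamma=1,\ r=1,\ m_1=2$ is excluded because in a group of order $4p$ the relevant commutator would have to be an involution lying in the normal Sylow $p$-subgroup, which has no $2$-torsion. This reduces everything to the enumeration of the genus-$0$ signatures.

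For the enumeration I would record that, by Sylow's theorems, the number of Sylow $p$-subgroups is $1$ for $p\ge 7$, so $P\cong\mathbb{Z}_p$ is normal and $G\cong\mathbb{Z}_p\rtimes Q$ with $|Q|=4$. Consequently the only possible element orders are $1,2,4,p,2p,4p$; in particular no $m_i$ can equal $3$. Substituting $m_i\in\{2,4,p,2p,4p\}$ into the displayed equation and clearing denominators by $4p$, a short divisibility argument shows that any signature containing an entry divisible by $p$ leads to a relation of the form $p\cdot N=k$ with $N\in\mathbb{Z}$ and $1\le k\le 8$, which is impossible for $p\ge 7$. This leaves exactly the two possibilities $(0;2,2,2,2,2)$ and $(0;2,2,4,4)$, of complex dimensions $3\cdot0-3+5=2$ and $3\cdot0-3+4=1$ respectively.

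It then remains to decide which groups of order $4p$ admit a surface-kernel epimorphism for each signature. I would list the groups $\mathbb{Z}_p\rtimes Q$ explicitly: the two abelian groups, the dicyclic group, the dihedral group $D$ of order $4p$, and---only when $4\mid p-1$, i.e.\ $g\equiv 2\bmod 4$---the metacyclic group $G_1=\langle a,b\mid a^{p}=b^4=1,\ bab^{-1}=a^{r}\rangle$ with $r$ of order $4$ modulo $p$. The selection is made by counting elements of the prescribed orders. For $(0;2,2,2,2,2)$ one needs five involutions generating $G$; in every group of order $4p$ except $D$ the involutions fail to generate (they lie in a single $\mathbb{Z}_2$, in a Klein four-group, or in a proper dihedral subgroup of order $2p$), whereas $D$ is so generated, giving the family $\bar{\mathcal{F}}_g^2$. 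For $(0;2,2,4,4)$ one needs elements of order $4$, which exist only in the dicyclic group and in $G_1$; in the dicyclic case the unique involution forces the generated subgroup to be cyclic of order $4$, so only $G_1$ survives, producing $\bar{\mathcal{F}}_g^1$, which exists precisely when $g\equiv 2\bmod4$. Matching these outcomes with the congruence class of $g$ (recall that $g-1$ prime and $g\ge 8$ force $g$ even, hence $g\equiv 0$ or $2\bmod 4$) gives the statement.

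I expect the main obstacle to lie not in the numerics above but in upgrading ``there exists a surface-kernel epimorphism'' to ``$S$ lies in a single equisymmetric family'': one must verify that, for each admissible signature, the surface-kernel epimorphisms onto the chosen group form a single orbit under the action of $\Aut(G)$ together with the braid action of the modular group, so that the corresponding stratum is irreducible of the asserted dimension. Carrying this out requires an explicit analysis of generating vectors up to these two actions, together with the constructive existence of at least one such vector---for instance $x_1=b^2,\ x_2=a^jb^2,\ x_3=a^kb,\ x_4=(x_1x_2x_3)^{-1}$ with $j\neq 0$ in the $G_1$ case, whose entries one checks to have the right orders and to generate $G_1$---which I would settle by direct computation.
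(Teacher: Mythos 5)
Your plan follows essentially the same route as the paper's proof: Riemann--Hurwitz reduces the possible signatures to $(1;2)$, $(0;2,2,4,4)$ and $(0;2,2,2,2,2)$; Sylow theory gives the short list of groups of order $4p$; the case $(1;2)$ dies because the image of the elliptic generator lies in the commutator subgroup, which sits inside the normal Sylow $p$-subgroup and so has no involutions (the paper states exactly this); and the surviving signatures are matched to $\mathbf{D}_{2(g-1)}$ and $C_{g-1}\rtimes_4 C_4$ by the same involution-location arguments you give (unique involution in the dicyclic group, involutions confined to $C_2^2$ in $C_p\times C_2^2$, involutions confined to the dihedral subgroup $\langle a,b^2\rangle$ of order $2p$ in $G_1$). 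Your dicyclic exclusion for $(0;2,2,4,4)$ is in fact slightly cleaner than the paper's: since $b^2$ is the unique involution, the long relation forces $\theta(x_4)=\theta(x_3)^{-1}$, so the image is $\langle\theta(x_3)\rangle\cong C_4$; the paper instead runs a two-case computation with $\theta(x_3)=a^{t_1}b$, $\theta(x_4)=a^{t_2}b^{\pm 1}$.

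Two points need repair. First, your assertion that elements of order $4$ ``exist only in the dicyclic group and in $G_1$'' is false: the cyclic group $C_{4p}$ also contains them. The exclusion still goes through, since in $C_{4p}$ all elements of order dividing $4$ lie in its unique $C_4$ subgroup, so no generating vector of orders $(2,2,4,4)$ can be surjective --- this is precisely the paper's argument that $C_{4q}$ cannot be generated by elements of order $2$ and $4$ --- but as written your case list is incomplete. Second, and more substantively, the theorem asserts that $\bar{\mathcal{F}}_g^1$ and $\bar{\mathcal{F}}_g^2$ are \emph{equisymmetric} of the stated dimensions, which requires showing that all surface-kernel epimorphisms for each pair (group, signature) form a single $(\Aut(G)\times\mathscr{B})$-orbit; you correctly identify this as the main remaining obstacle, but you only promise the computation. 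The paper devotes Propositions \ref{oja} and \ref{t4q2} to it: one normalizes the images using the explicit element lists, inner and power automorphisms ($a\mapsto a^t$, $b\mapsto b$; $r\mapsto r^l$, $s\mapsto sr$), and then finishes with explicit braid moves, via the relations $\Phi_1\cdot\theta_n=\theta_{n+1}$ in the $(0;2,2,4,4)$ case and $\Phi_2^2\cdot\theta_n=\theta_{n-2}$ in the $(0;2,2,2,2,2)$ case (where a separate subcase $t_4=q$ must also be reduced to the reference epimorphism). In the dihedral case this normalization itself needs a nontrivial preliminary step, proved in the paper and absent from your sketch: exactly one of the five images equals the central involution $r^q$, the subcases $l=2,3$ being ruled out by ad hoc arguments. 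Until that orbit analysis is executed, your argument establishes the classification of admissible pairs (group, signature) but not the single-stratum statement in the theorem.
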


By virtue of Dirichlet's prime number theorem, the congruences of Theorem \ref{teo1} are satisfied for infinitely many values. The genera $g=3,4$ and $6$ are exceptional in the sense that, in addition to the families before introduced, appear finitely many quasiplatonic Riemann surfaces (see \cite{bar}, \cite{Bo}, \cite{Brou}, \cite{C}, \cite{CI2}, \cite{Ku} and \cite{magaard}).  

\s

As proved by Costa and Izquierdo in \cite{CI}, the largest order of the full automorphism group of a complex one-dimensional family of Riemann surfaces of genus $g$, appearing in all genera, is $4g+4.$ On the other hand, $4g-4$ is the maximal possible order of the full automorphism group of a complex two-dimensional family of compact Riemann surfaces of genus $g.$  It is worth mentioning that the existence of the family $\bar{\mathcal{F}}_g^2$ shows that this upper bound is attained in each genus, with $g-1$ not necessarily prime.

\s

Let $t=1,2.$ The family $\bar{\mathcal{F}}_g^t$ is a closed algebraic subvariety of the moduli space of compact Riemann surfaces of genus $g$; we shall denote by ${\mathcal{F}}_g^t$ its interior,  and by $$\partial(\bar{\mathcal{F}}_g^t)=\bar{\mathcal{F}}_g^t \setminus\mathcal{F}_g^t$$its boundary. Based on classical results due to Singerman \cite{singerman2} and on Belolipetsky and Jones' classification \cite{BJ}, we are  able to describe the interior and (up to possibly finitely many exceptional cases in small genera) the boundary of the families $\bar{\mathcal{F}}_g^t.$ 

More precisely: 

\begin{theo} \label{frontera}
Let $g \geqslant8$ such that $g-1$ is prime. 

For $g \equiv 2 \mbox{ mod }4,$ the interior ${\mathcal{F}}_g^1$ of the family $\bar{\mathcal{F}}_g^1$ consists of those Riemann surfaces for which $G_1$ is the full automorphism group. Moreover, there is a positive integer $\epsilon_1$ such that if $g \geqslant\epsilon_1$  then
\begin{displaymath}
\partial(\bar{\mathcal{F}}_g^1)  = \left\{ \begin{array}{ll}
  \{X_1, X_2\} & \textrm{if $g \equiv 2 \mbox{ mod } 8$}\\
 \,\,\,\,\, \,\,\,\,\, \emptyset & \textrm{if $g \not\equiv 2 \mbox{ mod } 8,$}

  \end{array} \right.
\end{displaymath}where $X_1$ and $X_2$ are the two non-isomorphic compact Riemann surfaces of genus $g$ with full automorphism group of order $8g-8$.

The interior ${\mathcal{F}}_g^2$ of the family $\bar{\mathcal{F}}_g^2$ consists of those Riemann surfaces for which $G_2$ is the full automorphism group. Moreover, there is a positive integer $\epsilon_2$ such that  if $g \geqslant\epsilon_2$ then \begin{displaymath}
\partial(\bar{\mathcal{F}}_g^2)  = \left\{ \begin{array}{ll}
 \{Y_1, Y_2\} & \textrm{if $g \equiv 2 \mbox{ mod } 3$}\\
 \,\,\,\,\, \,\,\,\,\, \emptyset & \textrm{if $g \not\equiv 2 \mbox{ mod } 3,$}

  \end{array} \right.
\end{displaymath}where $Y_1$ and $Y_2$ are the two non-isomorphic compact Riemann surfaces of genus $g$ with full automorphism group of order $12g-12$.
\end{theo}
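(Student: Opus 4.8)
The plan is to reduce both statements to the classification of those surfaces in $\bar{\mathcal{F}}_g^t$ whose automorphism group strictly contains $G_t$, and then to resolve this by combining Singerman's inclusion list with the Belolipetsky--Jones classification and an arithmetic analysis modulo the prime $g-1$. By construction $\bar{\mathcal{F}}_g^t$ is the closure of the equisymmetric stratum determined by the topological action of $G_t$ with signature $\sigma_t$ (where $\sigma_1=(0;2,2,4,4)$ and $\sigma_2=(0;2,2,2,2,2)$). First I would record that, by the general properties of the equisymmetric stratification of the moduli space, a surface $S\in\bar{\mathcal{F}}_g^t$ lies in the interior $\mathcal{F}_g^t$ if and only if $G_t$ is its full automorphism group, while $S\in\partial(\bar{\mathcal{F}}_g^t)$ precisely when $\mathrm{Aut}(S)\supsetneq G_t$. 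This already yields the two interior assertions and isolates the boundary computation.

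Next I would analyse the extensions of the action. If $S\in\partial(\bar{\mathcal{F}}_g^t)$, then writing $K$ for the surface group we have $K\triangleleft\Gamma_{\sigma_t}\subsetneq\Gamma'$ with $\Gamma'/K=\mathrm{Aut}(S)=:G'$ and $\Gamma'$ a Fuchsian group of some signature $\sigma'$; in particular $G_t\leqslant G'$ with index $[\Gamma':\Gamma_{\sigma_t}]$. I would invoke Singerman's classification of finite-index inclusions of Fuchsian groups to list the admissible $\sigma'$ for each $\sigma_t$. Since $|G'|>4g-4$, the group $G'$ is large, so by the dichotomy recalled in the introduction $\sigma'$ is either a triangle signature (and then $S$ is quasiplatonic, hence an isolated point of $\bar{\mathcal{F}}_g^t$) or one of the one-dimensional signatures $(0;2,2,2,n)$, $(0;2,2,3,n)$. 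A Riemann--Hurwitz computation compatible with the index $[\Gamma':\Gamma_{\sigma_t}]$ pins down the candidate triangle signatures and, together with the prime hypothesis, discards the one-dimensional signatures; the surviving extensions have $|G'|=8g-8$ for $t=1$ (index $2$) and $|G'|=12g-12$ for $t=2$ (index $3$).

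For each surviving candidate I would translate the existence of $G_t\leqslant G'$ into a congruence modulo the prime $g-1$. For $t=1$, building $G'$ of order $8g-8$ from $G_1$ amounts to promoting the primitive $4$-th root $r$ to a primitive $8$-th root of unity in the field of $g-1$ elements; as $(\mathbb{Z}/(g-1))^{\ast}$ is cyclic of order $g-2$, this is possible exactly when $8\mid g-2$, that is $g\equiv 2\bmod 8$. For $t=2$, building $G'$ of order $12g-12$ from the dihedral group $G_2$ requires a primitive cube root of unity, which exists exactly when $3\mid g-2$, that is $g\equiv 2\bmod 3$. When the congruence fails no such $G'$ exists and the boundary is empty; when it holds, $G'$ realizes a member of the admissible sequence $ag-a$ with $a=8$ (resp.\ $a=12$). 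Feeding this into the Belolipetsky--Jones classification under the hypothesis that $g-1$ is prime, and counting the $\mathrm{Aut}(G')$-classes of generating vectors, identifies the boundary with exactly the two non-isomorphic quasiplatonic surfaces $X_1,X_2$ (resp.\ $Y_1,Y_2$), valid once $g$ exceeds the Belolipetsky--Jones threshold $\epsilon_t$.

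The principal difficulty is to make the enumeration of extensions genuinely exhaustive: using Singerman's list together with the prime hypothesis, I must verify that no candidate signature $\sigma'$ beyond the two identified ones produces a surface in $\bar{\mathcal{F}}_g^t$, and that the two surfaces admit no further enlargement, so that $8g-8$ and $12g-12$ are indeed the full automorphism orders. A secondary but delicate point is the passage from abstract group extensions to actual points of the closure, together with the exact count of two; both rest on the generating-vector bookkeeping and on choosing $\epsilon_t$ large enough to discard the sporadic quasiplatonic surfaces in small genera mentioned after Theorem~\ref{teo1}.
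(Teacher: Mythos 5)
Your overall route---Singerman's lists, the Belolipetsky--Jones classification, congruences modulo the prime $q=g-1$, and generating-vector verification---is essentially the paper's route, but two of your intermediate steps fail as stated. The most concrete problem is the claim that a Riemann--Hurwitz computation ``pins down'' $|G'|=8g-8$ for $t=1$ and $|G'|=12g-12$ for $t=2$. It does not: the normalized areas of $(0;2,2,4,4)$ and $(0;2,2,2,2,2)$ are both $\tfrac12$, while $(0;2,6,6)$ has area $\tfrac16$ and $(0;2,8,8)$ has area $\tfrac14$, so \emph{both} triangle candidates are arithmetically compatible with \emph{both} families (index $3$, order $12g-12$, respectively index $2$, order $8g-8$). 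The cross-elimination is group-theoretic, and it is missing from your argument: $(C_q\rtimes_6 C_6)\times C_2$ has no element of order $4$, hence cannot contain $G_1\cong C_q\rtimes_4 C_4$, and $C_q\rtimes_8 C_8$ has no element of order $2q$, hence cannot contain $G_2\cong\mathbf{D}_{2q}$. Your congruences (promoting the $4$-th root to an $8$-th root, existence of a primitive cube root) only govern the \emph{abstract existence} of the candidate group $G'$, not the containment $G_t\leqslant G'$, which is the relevant condition; consequently your ``empty boundary'' conclusions do not follow as argued---for instance, when $g\equiv 2 \bmod 3$ but $g\not\equiv 2\bmod 8$, the order-$(12g-12)$ surfaces $Y_1,Y_2$ do exist, and your proposal leaves open whether they lie in $\partial(\bar{\mathcal{F}}_g^1)$. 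Note also that Singerman's list only constrains inclusions holding for the \emph{generic} group of a given signature; at special boundary points the normalizer may a priori have any signature of smaller area, so exhaustiveness must come, as in the paper, from Belolipetsky--Jones applied to all surfaces of genus $q+1$ with automorphism group of order $\mu q$, $4\mid\mu$, $\mu\geqslant 8$.

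Two further gaps. First, the interior statement is not ``already'' given by stratification generalities: for $t=1$ Singerman's list contains the dimension-preserving index-$2$ extension $(0;2,2,4,4)<(0;2,2,2,4)$, so one must prove that no group of order $8g-8$ acts with signature $(0;2,2,2,4)$; the paper gets this from Belolipetsky--Jones for $g\geqslant 18$ and from Conder's computational data for $g=14$. Since the theorem asserts the interior claim for \emph{all} admissible $g\geqslant 8$ (only the boundary description is conditional on $g\geqslant\epsilon_t$), your device of choosing $\epsilon_t$ large does not cover $g=14$. (For $t=2$ your argument is fine, since $(0;2,2,2,2,2)$ is generically maximal by Singerman.) Second, you flag but do not carry out the converse inclusion $\{X_1,X_2\}\subseteq\partial(\bar{\mathcal{F}}_g^1)$, respectively $\{Y_1,Y_2\}\subseteq\partial(\bar{\mathcal{F}}_g^2)$; this is not secondary bookkeeping but the substantive half of the boundary identification, and in the paper it is done by explicitly embedding a group of signature $(0;2,2,4,4)$, respectively $(0;2,2,2,2,2)$, into $\Delta'$ and checking that the restriction of $\Theta_i$ surjects onto $\langle\alpha,\beta^2\rangle\cong C_q\rtimes_4 C_4$, respectively $\langle\alpha\gamma,\beta^3\rangle\cong\mathbf{D}_{2q}$, with equisymmetry (Propositions \ref{oja} and \ref{t4q2}) guaranteeing that possessing an action with the right signature already places $X_i$, $Y_i$ in the closure of the family. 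Without this step you obtain only $\partial(\bar{\mathcal{F}}_g^t)\subseteq\{X_1,X_2\}$ (respectively $\subseteq\{Y_1,Y_2\}$), not equality.
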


We recall that the Jacobian variety $JS$ of a compact Riemann surface $S$ of genus $g$  is an irreducible principally polarized abelian variety of dimension $g.$ The relevance of  the Jacobian varieties  lies in the well-known Torelli's theorem, which establishes that two compact Riemann surfaces are isomorphic if and only if the corresponding Jacobian varieties  are isomorphic as principally polarized abelian varieties. See, for example,  \cite{bl} and \cite{debarre}.

If $H$ is a group of automorphisms of  $S$ then the associated regular covering map $\pi : S \to S_H$ given by the action of $H$ on $S$  induces a homomorphism $$\pi^*: JS_H \to JS$$between the corresponding Jacobians; the set $\pi^*(JS_H)$ is an abelian subvariety of $JS$ which is isogenous to $JS_H.$ We keep the same notations as in Theorem \ref{teo1}.

\begin{theo} \label{teo3}Let $g \geqslant8$ such that $g-1$ is prime.

If $S \in \bar{\mathcal{F}}_g^1$ then the Jacobian variety $JS$ decomposes, up to isogeny, as the product $$JS \sim JS_{\langle a \rangle} \times (JS_{\langle b \rangle})^4.$$

If $S \in \bar{\mathcal{F}}_g^2$ then the Jacobian variety $JS$ decomposes, up to isogeny, as the product $$JS \sim JS_{\langle r \rangle} \times JS_{\langle s \rangle} \times JS_{\langle sr \rangle}.$$
\end{theo}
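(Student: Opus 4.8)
The plan is to deduce both decompositions from a single purely representation--theoretic device, the isogeny criterion of Kani and Rosen (equivalently, the group--algebra decomposition of Jacobians). Recall that if a finite group $G$ acts on $S$ and $H_1,\dots,H_n,K_1,\dots,K_m$ are subgroups for which there is an isomorphism of rational $G$--modules
\[
\bigoplus_{i=1}^{n}\mathrm{Ind}_{H_i}^{G}\mathbf{1}\;\cong\;\bigoplus_{j=1}^{m}\mathrm{Ind}_{K_j}^{G}\mathbf{1},
\]
where $\mathrm{Ind}_{H}^{G}\mathbf{1}=\mathbb{Q}[G/H]$ is the associated permutation module, then $\prod_i JS_{H_i}\sim\prod_j JS_{K_j}$. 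Here $JS_{\{1\}}=JS$ corresponds to the regular module $\mathbb{Q}[G]$, while $JS_{G}=J(S/G)=0$, since in both families $S/G$ is $\mathbb{P}^1$ (the signatures begin with genus $0$); hence any copy of the trivial module $\mathbf{1}=\mathrm{Ind}_{G}^{G}\mathbf{1}$ occurring in such a relation contributes a factor isogenous to $0$ and may be discarded. The proof thus reduces to exhibiting, in each case, a relation of the shape (induced modules from the relevant cyclic subgroups) $\cong\mathbb{Q}[G]\oplus\mathbf{1}^{\oplus c}$, and since every module involved is rational it suffices to compare ordinary characters. Throughout write $p=g-1$, which is prime by hypothesis.

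For $S\in\bar{\mathcal{F}}_g^2$ the group $G_2$ is dihedral of order $4p$ with cyclic subgroup $\langle r\rangle$ of order $2p$; its irreducible characters are the four linear ones $\mathbf{1},\chi_1,\chi_2,\chi_3$ together with $p-1$ characters $\psi_1,\dots,\psi_{p-1}$ of degree $2$. First I would record that $\mathrm{Ind}_{\langle r\rangle}^{G_2}\mathbf{1}=\mathbf{1}\oplus\chi_1$ (the two characters trivial on $\langle r\rangle$) and that, as a reflection has character $0$ on every degree--two irreducible, Frobenius reciprocity gives
\[
\mathrm{Ind}_{\langle s\rangle}^{G_2}\mathbf{1}=\mathbf{1}\oplus\chi_2\oplus\bigoplus_{k=1}^{p-1}\psi_k,\qquad
\mathrm{Ind}_{\langle sr\rangle}^{G_2}\mathbf{1}=\mathbf{1}\oplus\chi_3\oplus\bigoplus_{k=1}^{p-1}\psi_k,
\]
the linear summands being fixed by the values of $\chi_1,\chi_2,\chi_3$ at the chosen reflections. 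Adding the three and comparing with $\mathbb{Q}[G_2]=\mathbf{1}\oplus\chi_1\oplus\chi_2\oplus\chi_3\oplus\bigoplus_k\psi_k^{\oplus 2}$ yields
\[
\mathrm{Ind}_{\langle r\rangle}^{G_2}\mathbf{1}\oplus\mathrm{Ind}_{\langle s\rangle}^{G_2}\mathbf{1}\oplus\mathrm{Ind}_{\langle sr\rangle}^{G_2}\mathbf{1}\;\cong\;\mathbb{Q}[G_2]\oplus\mathbf{1}^{\oplus 2},
\]
so the criterion, after discarding $JS_{G_2}=0$, gives $JS\sim JS_{\langle r\rangle}\times JS_{\langle s\rangle}\times JS_{\langle sr\rangle}$.

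For $S\in\bar{\mathcal{F}}_g^1$ the group $G_1\cong\mathbb{Z}/p\rtimes\mathbb{Z}/4$ has $\langle a\rangle$ normal of order $p$ and $\langle b\rangle\cong\mathbb{Z}/4$; its complex irreducibles are the four linear characters $\lambda_0,\dots,\lambda_3$ inflated from $G_1/\langle a\rangle\cong\mathbb{Z}/4$ and $(p-1)/4$ characters $\theta_1,\dots,\theta_{(p-1)/4}$ of degree $4$, each induced from a nontrivial character of $\langle a\rangle$. Since $\mathrm{Ind}_{\langle a\rangle}^{G_1}\mathbf{1}=\mathbb{Q}[G_1/\langle a\rangle]$ is the regular module of $\mathbb{Z}/4$, it equals $\lambda_0\oplus\lambda_1\oplus\lambda_2\oplus\lambda_3$. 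The key computation is $\mathrm{Ind}_{\langle b\rangle}^{G_1}\mathbf{1}$: each $\theta_m$, being induced from a character of the normal subgroup $\langle a\rangle$, vanishes on all of $G_1\setminus\langle a\rangle$, in particular at $b,b^2,b^3$, so $\langle\theta_m,\mathrm{Ind}_{\langle b\rangle}^{G_1}\mathbf{1}\rangle=\tfrac14\theta_m(1)=1$, while among the linear characters only $\lambda_0$ restricts trivially to $\langle b\rangle$; hence $\mathrm{Ind}_{\langle b\rangle}^{G_1}\mathbf{1}=\lambda_0\oplus\bigoplus_{m}\theta_m$. Combining and comparing with $\mathbb{Q}[G_1]=\bigoplus_{k=0}^{3}\lambda_k\oplus\bigoplus_m\theta_m^{\oplus 4}$ gives
\[
\mathrm{Ind}_{\langle a\rangle}^{G_1}\mathbf{1}\oplus\bigl(\mathrm{Ind}_{\langle b\rangle}^{G_1}\mathbf{1}\bigr)^{\oplus 4}\;\cong\;\mathbb{Q}[G_1]\oplus\mathbf{1}^{\oplus 4},
\]
whence $JS\sim JS_{\langle a\rangle}\times(JS_{\langle b\rangle})^{4}$ after discarding $JS_{G_1}=0$.

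I expect the only genuine obstacle to be the character bookkeeping for the metacyclic group $G_1$, and specifically the determination of $\mathrm{Ind}_{\langle b\rangle}^{G_1}\mathbf{1}$ via the vanishing of the induced characters $\theta_m$ off $\langle a\rangle$; the dihedral case and the final assembly are routine once the induced characters are in hand. Finally, since every step is purely group--theoretic and uses only the existence of the $G_t$--action with its prescribed signature, the resulting isogenies hold verbatim for each member of the closed families $\bar{\mathcal{F}}_g^{t}$, including the boundary surfaces of Theorem \ref{frontera}.
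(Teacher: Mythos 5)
Your proof is correct, but it takes a genuinely different route from the paper's. You invoke the classical Kani--Rosen theorem \cite{KR} with an \emph{exact} relation of rational permutation modules: $\mathrm{Ind}_{\langle r\rangle}^{G_2}\mathbf{1}\oplus\mathrm{Ind}_{\langle s\rangle}^{G_2}\mathbf{1}\oplus\mathrm{Ind}_{\langle sr\rangle}^{G_2}\mathbf{1}\cong\mathbb{Q}[G_2]\oplus\mathbf{1}^{\oplus 2}$ and $\mathrm{Ind}_{\langle a\rangle}^{G_1}\mathbf{1}\oplus\bigl(\mathrm{Ind}_{\langle b\rangle}^{G_1}\mathbf{1}\bigr)^{\oplus 4}\cong\mathbb{Q}[G_1]\oplus\mathbf{1}^{\oplus 4}$. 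I checked the character bookkeeping: the dimensions ($4q+2$ and $4q+4$) match, reflections have trace zero on the two-dimensional irreducibles of $\mathbf{D}_{2q}$, the degree-$4$ characters $\theta_m$ of $C_q\rtimes_4 C_4$ vanish off the normal subgroup $\langle a\rangle$ (so each occurs once in $\mathrm{Ind}_{\langle b\rangle}^{G_1}\mathbf{1}$), and rational modules with equal complex characters are indeed $\mathbb{Q}$-isomorphic, so the idempotent relation needed for \cite{KR} holds; since $S/G_t\cong\mathbb{P}^1$ the trivial summands contribute $JS_{G_t}=0$ and the isogenies follow with no residual factor. The paper instead uses the admissibility criterion of \cite{kanirubiyo}: it verifies the fixed-subspace inequalities $\sum_i d_V^{H_i}\leqslant d_V$ for $V\in\mathfrak{J}_{\theta}$ (using four distinct conjugates $\langle a^{t_i}b\rangle$ of $\langle b\rangle$ in the first case), which only yields $JS\sim\prod_i JS_{H_i}\times P$ for an unknown abelian subvariety $P$, and then kills $P$ by a dimension count, computing the quotient genera from the ramification data via Riemann--Hurwitz ($g_{S_{\langle r\rangle}}=1$, $g_{S_{\langle s\rangle}}=\tfrac{q-1}{2}$, $g_{S_{\langle sr\rangle}}=\tfrac{q+1}{2}$, resp.\ $g_{S_{\langle a\rangle}}=2$, $g_{S_{\langle b\rangle}}=\tfrac{q-1}{4}$). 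What each approach buys: your exact module identity makes the argument independent of the signature and of the particular epimorphism---only the genus-zero quotient matters---and it sidesteps both the fixed-point/ramification bookkeeping and the conjugacy step identifying $JS_{\langle a^{t_i}b\rangle}$ with $JS_{\langle b\rangle}$; the paper's route produces as a by-product the genera of the factors, which it exploits in the remark that $JS$ contains an elliptic curve when $S\in\bar{\mathcal{F}}_g^2$ (your method would need a separate Riemann--Hurwitz computation to recover this). Your closing observation that the isogenies hold for every member of the closed families, boundary included, is also sound, since boundary surfaces still carry the $G_t$-action with quotient $\mathbb{P}^1$.
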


\s

The article is organized as follows.

\begin{enumerate}
\item In Section \ref{preli} we shall review the basic background: namely, Fuchsian groups,  actions on compact Riemann surfaces, the equisymmetric stratification of the moduli space and the decomposition of Jacobian varieties. 
\item In Section \ref{existencia} we shall prove the existence of the families $\bar{\mathcal{F}}_g^1$ and $\bar{\mathcal{F}}_g^2.$ \item Theorems \ref{teo1}, \ref{frontera} and \ref{teo3} will be proved in Section \ref{unicidad}, \ref{fronterac} and \ref{jaco} respectively. 
\item Finally, and for the sake of completeness, in Section \ref{noprimo} two examples will be constructed to show that Theorem \ref{teo1} is false if $g-1$ is not  prime. 
\end{enumerate}
%%%%%%%%%%%%%%%%%%
\section{Preliminaries} \label{preli}

\subsection{Fuchsian groups and group actions on Riemann surfaces} By a {\it Fuchsian group} we mean a discrete group of automorphisms of the upper-half plane $$\mathbb{H}=\{z \in \mathbb{C}: \mbox{Im}(z) >0 \}.$$  

If $\Delta$ is a Fuchsian group and the orbit space $\mathbb{H}_{\Delta}$ given by the action of $\Delta$ on $\mathbb{H}$ is  compact, then the algebraic structure of $\Delta$ is determined by its {\it signature}: \begin{equation} \label{sig} \sigma(\Delta)=(h; m_1, \ldots, m_l),\end{equation}where $h$ is the genus of the quotient surface $\mathbb{H}_{\Delta}$ and $m_1, \ldots, m_l$ are the branch indices in the universal canonical projection $\mathbb{H} \to \mathbb{H}_{\Delta}.$ If $l=0$ then $\Delta$ is called a {\it surface Fuchsian group}.

\s

Let $\Delta$ be a Fuchsian group of signature \eqref{sig}. Then 
\begin{enumerate}
\item $\Delta$ has a canonical presentation given by generators $a_1, \ldots, a_{h}$, $b_1, \ldots, b_{h},$ $ x_1, \ldots , x_l$ and relations
\begin{equation}\label{prese}x_1^{m_1}=\cdots =x_l^{m_l}=\Pi_{i=1}^{h}[a_i, b_i] \Pi_{i=1}^l x_i=1,\end{equation}where $[u,v]$ stands for the commutator $uvu^{-1}v^{-1},$
\item the elements of $\Delta$ of finite order are conjugate to powers of $x_1, \ldots, x_l.$
\item the Teichm\"{u}ller space of $\Delta$ is a complex analytic manifold homeomorphic to the complex ball of dimension $3h-3+l$, and
\item the hyperbolic area of each fundamental region of $\Delta$ is  $$\mu(\Delta)=2 \pi [2h-2 + \Sigma_{i=1}^l(1-\tfrac{1}{m_i})].$$ 
\end{enumerate}

Let $\Gamma$ be a group of automorphisms of $\mathbb{H}.$ If $\Delta$ is a subgroup of $\Gamma$ of finite index then $\Gamma$ is also Fuchsian and they are related by the Riemann-Hurwitz formula $$\mu(\Delta)= [\Gamma : \Delta] \cdot \mu(\Gamma).$$

\s

Let $S$ be a compact Riemann surface and let $\mbox{Aut}(S)$ denote its full automorphism group. It is said that a finite group $G$ acts on $S$ if there is a group monomorphism $\psi: G\to \Aut(S).$ The space of orbits $S_G$ of the action of $G$ on $S$ is endowed with a Riemann surface structure such that the  projection $S \to S_G$ is holomorphic. 

\s

Compact Riemann surfaces and group actions can be understood in terms of Fuchsian groups as follows. By uniformization theorem (see, for example, \cite[p. 203]{FK}), there is a (uniquely determined, up to conjugation) surface Fuchsian group $\Gamma$ such that $S$ and $\mathbb{H}_{\Gamma}$ are isomorphic. Moreover, $G$ acts on $S \cong \mathbb{H}_{\Gamma}$ if and only if there is a Fuchsian group $\Delta$ containing $\Gamma$ together with a group  epimorphism \begin{equation*}\label{epi}\theta: \Delta \to G \, \, \mbox{ such that }  \, \, \mbox{ker}(\theta)=\Gamma.\end{equation*}

In this case, it is said that $G$ acts on $S$ with signature $\sigma(\Delta)$ and that this action is {\it  represented} by the epimorphism $\theta.$ If $G$ is a subgroup of $G'$ then the action of $G$ on $S$ is said to {\it extend} to an action of $G'$ on $S$ if:\begin{enumerate}
\item there is a Fuchsian group $\Delta'$ containing $\Delta.$ 
\item the Teichm\"{u}ller spaces of $\Delta$ and $\Delta'$ have the same dimension, and
\item there exists an epimorphism $$\Theta: \Delta' \to G' \, \, \mbox{ in such a way that }  \, \, \Theta|_{\Delta}=\theta.$$
\end{enumerate} 

An action is termed {\it maximal} if it cannot be extended. A complete list of signatures of Fuchsian groups $\Delta$ and $\Delta'$ for which it might be possible to have an extension as before was determined by Singerman in \cite{singerman2}. 

\subsection{Actions and equisymmetric stratification} \label{strati} Let $\text{Hom}^+(S)$ denote the group of orientation preserving homeomorphisms of $S.$ Two actions $\psi_i: G \to \mbox{Aut}(S)$  are  {\it topologically equivalent} if there exist $\omega \in \Aut(G)$ and $f \in \text{Hom}^+(S)$ such that
\begin{equation}\label{equivalentactions}
\psi_2(g) = f \psi_1(\omega(g)) f^{-1} \hspace{0.5 cm} \mbox{for all} \,\, g\in G.
\end{equation}

Each homeomorphism $f$ satisfying \eqref{equivalentactions} yields an automorphism $f^*$ of $\Delta$ where $\mathbb{H}_{\Delta} \cong S_G$. If $\mathscr{B}$ is the subgroup of $\mbox{Aut}(\Delta)$ consisting of them, then $\mbox{Aut}(G) \times \mathscr{B}$ acts on the set of epimorphisms defining actions of $G$ on $S$ with signature $\sigma(\Delta)$ by $$((\omega, f^*), \theta) \mapsto \omega \circ \theta \circ (f^*)^{-1}.$$  

Two epimorphisms $\theta_1, \theta_2 : \Delta \to G$ define topologically equivalent actions if and only if they belong to the same $(\mbox{Aut}(G) \times \mathscr{B})$-orbit (see \cite{Brou}; also \cite{Harvey} and \cite{McB}). We remark that if the genus $h=g_{S_G}$ of $S_G$ is zero and  $$\Delta=\langle x_1, \ldots, x_l : x_1^{m_1}=\cdots = x_l^{m_l}=x_1 \cdots x_l=1\rangle,$$then $\mathscr{B}$ is generated by the {\it braid transformations} $\Phi_{i} \in \mbox{Aut}(\Delta)$ defined by $$x_i \mapsto x_{i+1}, \hspace{0.3 cm}x_{i+1} \mapsto x_{i+1}x_{i}x_{i+1}^{-1} \hspace{0.3 cm} \mbox{ and }\hspace{0.3 cm} x_j \mapsto x_j \mbox{ when }j \neq i, i+1$$for each $i \in \{1, \ldots, l-1\}.$ See, for example, \cite[p. 31]{trenzas}.

\s

Let $\mathscr{M}_g$ denote the moduli space of compact Riemann surfaces of genus $g \geqslant2.$ It is well-known  that $\mathscr{M}_g$ is endowed with a structure of complex analytic space of dimension $3g-3,$ and that for $g \geqslant4$ its singular locus $\mbox{Sing}(\mathscr{M}_g)$ agrees with the set of points representing compact Riemann surfaces with non-trivial automorphisms. 

\s

%We denote by $\bar{\mathscr{M}}_g^{G, \theta}$  the set of points representing compact Riemann surfaces $S$ of genus $g$ with a group of automorphisms isomorphic to $G$ acting on $S$ with topological class $\theta$, and by ${\mathscr{M}}_g^{G, \theta}$ the subset of $\bar{\mathscr{M}}_g^{G, \theta}$ consisting of points representing   Riemann surfaces $S$ for which $G$ is isomorphic to the full automorphism group of $S$. 

Following \cite{b}, the singular locus $\mbox{Sing}(\mathscr{M}_g)$ admits an {\it equisymmetric stratification} $\{ \mathscr{M}_g^{G, \theta}\}$, where 
each {\it equisymmetric stratum} $\mathscr{M}_g^{G, \theta}$, if nonempty, corresponds to one topological class of maximal actions. More precisely:
\begin{enumerate}
\item the closure  $\bar{\mathscr{M}}_g^{G, \theta}$ of $\mathscr{M}_g^{G, \theta}$ consists of those Riemann surfaces of genus $g$ admitting an 
action of the group $G$ with fixed topological class given by $\theta,$
\item $\bar{\mathscr{M}}_g^{G, \theta}$ is a closed  irreducible algebraic subvariety of $\mathscr{M}_g,$  
 
\item if the {\it stratum} ${\mathscr{M}}_g^{G, \theta}$ is nonempty, then it is a smooth, connected, locally closed algebraic subvariety of $\mathscr{M}_g$ which is Zariski dense in $\bar{\mathscr{M}}_g^{G, \theta},$
\item there are finitely many distinct strata, and $$\mbox{Sing}(\mathscr{M}_g) = \cup_{G \neq 1, \theta} \bar{\mathscr{M}}_g^{G, \theta}.$$
\end{enumerate}

Let $\bar{\mathcal{F}}$ be a (closed) family of compact Riemann surfaces of genus $g$ such that  each of its members has a group of automorphisms isomorphic to $G.$  The family $\bar{\mathcal{F}}$ is termed {\it equisymmetric} if its interior $\mathcal{F}$ consists of only one stratum.

\subsection{Decomposition of Jacobians} \label{jacos}

If a finite group $G$ acts on a compact Riemann surface $S$ then it is known that this action induces an isogeny decomposition \begin{equation} \label{iso}JS \sim A_1 \times \ldots \times A_{r}\end{equation} which is $G$-equivariant; see \cite{cr} and \cite{l-r}. The factors $A_j$ in \eqref{iso} are in bijective correspondence with the rational irreducible representations of $G$. If the factor $A_1$ is associated with the trivial representation of $G,$ then $A_1 \sim JS_G.$

\s

The decomposition of Jacobians with group actions has been extensively studied, going back to  Wirtinger, Schottky and Jung (see, for example,  \cite{SJ} and \cite{W}). For decompositions of Jacobians with respect to special groups, we refer to the articles \cite{d1}, \cite{nos}, \cite{IJR}, \cite{PA}, \cite{d3} and \cite{RCR}.
%
%\s
%
%
%
% For each complex representation $\rho : G \to \mbox{GL}(V)$ of $G$ we shall denote by $d_V$ its degree and by $d_V^H$ the dimension of the vector subspace of $V$ fixed under the action of a subgroup $H$ of $G$ (abusing notation, we also  write $V$ to refer to $\rho$). 
\s

Assume that $G$ acts on a compact Riemann surface $S$ with signature  \eqref{sig},  and that this action is determined by the epimorphism $\theta : \Delta \to G,$ where $\Delta$ is written with its canonical presentation \eqref{prese}. We define $\mathfrak{J}_{\theta}$ as the set of complex irreducible representations $V$ of $G$ characterized as follows:\begin{enumerate}
\item the trivial representation belongs to $\mathfrak{J}_{\theta}$ if and only if $h\neq 0,$  and  
\item a non-trivial  representation $V$ belongs to $\mathfrak{J}_{\theta}$ if and only if \begin{equation*}\label{dimensiones}
d_{V}(h -1)+\tfrac{1}{2}\Sigma_{i=1}^l (d_{V}-d_{V}^{\langle \theta(x_i) \rangle} ) \neq 0,\end{equation*}where $d_V$ is the degree of $V$ and  $d_{V}^{\langle \theta(x_i) \rangle}$  is the dimension of the subspace of $V$ fixed under the action of the subgroup of $G$ generated by $\theta(x_i).$

\end{enumerate}

Let $H_1, \ldots, H_t$ be groups of automorphisms  of $S$ such that $G$ contains $H_i$ for each $i$. Following \cite{kanirubiyo} (and using \cite[Theorem 5.12]{yoibero}), the collection $\{H_1, \ldots, H_t\}$ is termed {\it $G$-admissible} if 
$$d_{V}^{H_1}+ \cdots + d_{V}^{H_t}  \leqslant d_{V} \, \mbox{ for each } \, V \in \mathfrak{J}_{\theta},$$ and is called {\it admissible} if it is $G$-admissible for some group $G$. If $\{H_1, \ldots, H_t\}$ is admissible then, by \cite{kanirubiyo}, $JS$ decomposes, up to isogeny, as
$$JS \sim \Pi_{i=1}^t JS_{H_i} \times P$$for some abelian subvariety $P$ of $JS.$ See also \cite{KR}.

\subsection*{\it Notation} Let $n \geqslant2$ be an integer. 
Throughout this article we shall denote by $C_n$ the cyclic group of order $n$ and by $\mathbf{D}_n$ the dihedral group of order $2n.$

%%%%%%%%%%%%%%%%%%

\section{Existence of the families $\bar{\mathcal{F}}_g^1$ and $\bar{\mathcal{F}}_g^2$}\label{existencia}

\begin{prop} \label{oja} Let $g \geqslant6$ such that $g-1$ is a prime number and $g \equiv 2 \mbox{ mod } 4.$  There exists a complex one-dimensional equisymmetric family $\bar{\mathcal{F}}_g^1$ of compact Riemann surfaces of genus $g$ with a group of automorphisms isomorphic to \begin{equation*}  \langle a,b : a^{g-1}=b^4=1, bab^{-1}=a^{r} \rangle=C_{g-1} \rtimes_4 C_4,\end{equation*}where $r$ is a $4$-th primitive root of the unity in the field of $g-1$ elements, such that the signature of the action  is $(0; 2,2,4,4).$ 
\end{prop}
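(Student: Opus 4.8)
The plan is to realize $\bar{\mathcal{F}}_g^1$ as (the closure of) a single equisymmetric stratum $\mathscr{M}_g^{G_1,\theta}$, so the proof splits into three tasks: verifying that the group $G_1$ is well defined and has the right order, producing an explicit surface-kernel epimorphism, and checking that the resulting stratum has complex dimension one and is equisymmetric.

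First I would record the arithmetic. Since $g \equiv 2 \bmod 4$ and $g-1$ is prime, the integer $g-1$ is an odd prime with $g-1 \equiv 1 \bmod 4$; hence $4 \mid g-2 = |\mathbb{F}_{g-1}^{\ast}|$ and the cyclic group $\mathbb{F}_{g-1}^{\ast}$ contains an element $r$ of order $4$, which satisfies $r^2 \equiv -1$ and $r \not\equiv \pm 1$. This makes the semidirect product $G_1 = \langle a,b : a^{g-1}=b^4=1,\ bab^{-1}=a^r\rangle$ a well-defined group of order $4(g-1)=4g-4$. Taking $\Delta$ of signature $(0;2,2,4,4)$, the area formula of Section \ref{preli} gives $\mu(\Delta)=\pi$, so the Riemann--Hurwitz formula forces the kernel of any surface-kernel epimorphism $\Delta\to G_1$ to be a surface Fuchsian group of genus $g$; meanwhile the Teichm\"uller space of $\Delta$ has dimension $3\cdot 0-3+4=1$, which already pins down the dimension of the family.

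Next I would exhibit the epimorphism. Classifying the torsion of $G_1$, the order-two elements are exactly the $g-1$ involutions $a^i b^2$ (here one uses $r^2\equiv -1$, so $b^2$ inverts $\langle a\rangle$), while the order-four elements are precisely the $a^i b$ and $a^i b^3$. I then set
\[
\theta(x_1)=\theta(x_2)=b^2,\qquad \theta(x_3)=ab,\qquad \theta(x_4)=a^{r}b^{3}.
\]
A direct computation using $bab^{-1}=a^r$ and $r^2\equiv-1$ shows $\theta(x_1)\theta(x_2)\theta(x_3)\theta(x_4)=1$ and that the images have exactly the prescribed orders $2,2,4,4$, so $\theta$ respects the periods and $\ker\theta$ is torsion free. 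Surjectivity follows because $\theta(x_3)^2\theta(x_1)=a^{1+r}$ generates $\langle a\rangle$ (as $1+r\not\equiv 0$, since $r\not\equiv -1$), whence $b$ also lies in the image. Thus $\theta:\Delta\to G_1$ is a surface-kernel epimorphism and the stratum is nonempty.

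The main obstacle is equisymmetry: I must show that the interior of $\bar{\mathcal{F}}_g^1$ is a single stratum, which means (i) all surface-kernel epimorphisms $\Delta\to G_1$ of this signature fall into one $\mathrm{Aut}(G_1)\times\mathscr{B}$-orbit, and (ii) the generic action is maximal. For (i) I would start from an arbitrary generating vector $(a^{i_1}b^2,\ a^{i_2}b^2,\ \alpha,\ \beta)$ with $\alpha,\beta$ of order four and product one, and use the automorphisms of $G_1$ (which rescale the exponents of $a$ by units and can interchange the two order-four classes) together with the braid transformations $\Phi_i$ to normalize it to the vector above; the delicate point, and the bulk of the work, is controlling the interaction of the braid moves with the constraint $x_1x_2x_3x_4=1$ so that no second orbit survives. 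For (ii) I would invoke Singerman's list of signature inclusions: the signature $(0;2,2,4,4)$ can be enlarged in only finitely many ways, so apart from finitely many special surfaces (the boundary, treated in Theorem \ref{frontera}) the group $G_1$ is the full automorphism group, the action is maximal, and $\bar{\mathcal{F}}_g^1$ is the closure of a single one-dimensional stratum, hence equisymmetric.
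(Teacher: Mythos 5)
Your existence half is correct and follows essentially the paper's route: same signature, same Riemann--Hurwitz and Teichm\"uller dimension count, and your generating vector $(b^2,\,b^2,\,ab,\,a^rb^3)$ is a legitimate variant of the paper's $(b^2,\,ab^2,\,ab,\,b^3)$ --- your verification of the orders, of the product-one relation via $r^2\equiv -1$, of surjectivity via $a^{1+r}$, and of the torsion-free kernel is sound. The genuine gap is that the actual content of the proposition, equisymmetry, is precisely what you defer: you announce that ``the delicate point, and the bulk of the work'' is controlling the braid moves against the long relation ``so that no second orbit survives,'' and you never carry it out. This cannot be waved away: one-orbit statements of this kind fail for many pairs (group, signature), and the paper spends most of its proof on exactly this. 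Its normalization runs as follows: since $\langle a\rangle$ is normal, the relation $x_1x_2x_3x_4=1$ forces (after braids) $\theta(x_1)=a^{l_1}b^2$, $\theta(x_2)=a^{l_2}b^2$, $\theta(x_3)=a^{l_3}b$, $\theta(x_4)=a^{l_4}b^3$; an inner automorphism gives $l_4\equiv 0 \bmod q$, whence $l_2\equiv l_1+l_3 \bmod q$; surjectivity rules out $l_1\equiv l_3\equiv 0$, and rescaling by the automorphism $a\mapsto a^{t_3}$, $b\mapsto b$ with $l_3t_3\equiv 1 \bmod q$ reduces to the one-parameter list $\theta_n=(a^nb^2,\,a^{n+1}b^2,\,ab,\,b^3)$; finally the single identity $\Phi_1\cdot\theta_n=\theta_{n+1}$ collapses the whole list to $\Theta$. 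None of these steps appears in your proposal.

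Moreover, the one concrete mechanism you do cite for the normalization is false: you claim $\Aut(G_1)$ ``can interchange the two order-four classes'' $\langle a\rangle b$ and $\langle a\rangle b^3$. It cannot. Any automorphism sends $a\mapsto a^t$ with $t\not\equiv 0$ and must preserve $bab^{-1}=a^r$; if $b\mapsto a^sb^3$, conjugation of $a^t$ by $a^sb^3$ produces the exponent $r^3=-r\neq r$, a contradiction, so every automorphism of $C_{q}\rtimes_4 C_4$ maps $b$ into the coset $\langle a\rangle b$. (Contrast this with $C_q\rtimes_2 C_4$, where $a\mapsto a$, $b\mapsto b^3$ \emph{is} an automorphism --- the paper uses exactly that in Claim 1 of Section 4; the two groups behave differently here.) The interchange you need is instead achieved by the braid $\Phi_3$, which swaps the images of $x_3$ and $x_4$ up to conjugation, and conjugation preserves $\langle a\rangle$-cosets since $\langle a\rangle$ is normal. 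Finally, your appeal to Singerman's list for maximality of the generic action is harmless but not what the proposition requires as the paper proves it: the paper establishes equisymmetry purely as the single-orbit statement above and defers the extension analysis to Theorem \ref{frontera}, where the candidate overgroup of order $8q$ with signature $(0;2,2,2,4)$ is excluded via Belolipetsky--Jones.
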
 

\begin{proof} Set $q=g-1,$ and let $\Delta$ be a Fuchsian group of signature $\sigma=(0; 2,2,4,4)$ with canonical presentation $$\Delta=\langle x_1, x_2, x_3, x_4 : x_1^2=x_2^2=x_3^4=x_4^4=x_1x_2x_3x_4=1 \rangle.$$ The epimorphism $\Theta: \Delta \to C_{q} \rtimes_4 C_4$ defined by $$\Theta(x_1)=b^2, \,\, \Theta(x_2)=ab^2, \,\, \Theta(x_3)=ab, \,\,  \Theta(x_4)=b^3$$guarantees the existence of a complex one-dimensional family $\bar{\mathcal{F}}_g^1$ of compact Riemann surfaces $S$ of genus $g$ with a group of automorphisms $G$ isomorphic to $C_{q} \rtimes_4 C_4$ acting on $S$ with signature $\sigma.$ 

To prove that $\bar{\mathcal{F}}_g^1$ consists of only one stratum, we firstly notice that  the involutions of $C_{q} \rtimes_4 C_4$ are $a^lb^2$ and the elements of order 4 are $a^lb$ and $a^lb^3$ for $1 \leqslant l \leqslant q.$ Then,  up to a permutation, an epimorphism $\theta: \Delta \to C_{q} \rtimes_4 C_4$ representing an action of $G$ on $S$ is defined by$$\theta(x_1)=a^{l_1}b^2, \,\, \theta(x_2)=a^{l_2}b^2, \,\, \theta(x_3)=a^{l_3}b,  \,\, \theta(x_4)=a^{l_4}b^3,$$for some $1 \leqslant l_1, \ldots, l_4 \leqslant  q.$ After applying an inner automorphism of $G,$ we can assume that $l_4 \equiv 0 \mbox{ mod }q$ and therefore $l_2 \equiv l_1 + l_3 \mbox{ mod } q.$ Note that if $l_1 \equiv l_3 \equiv 0 \mbox{ mod } q$ then $\theta$ is not surjective; thus,  without loss of generality, we can assume $l_ 3 \not\equiv 0 \mbox{ mod } q.$ Now, consider the automorphism of $G$ given by $a \mapsto a^{t_3}, b \to b,$ where $l_3t_3 \equiv 1 \mbox{ mod } q,$ to obtain that $\theta$ is equivalent to the epimorphism $\theta_n$ defined by $$\theta_n(x_1)=a^{n}b^2, \,\, \theta_n(x_2)=a^{n+1}b^2, \,\, \theta_n(x_3)=ab, \,\, \theta_n(x_4)=b^3$$for some $1 \leqslant n \leqslant q.$ Finally, as $\Phi_{1} \cdot \theta_n=\theta_{n+1},$ each $\theta_n$  is equivalent to $\theta_0=\Theta.$   
\end{proof}

\begin{prop} \label{t4q2} There exists a complex two-dimensional family $\bar{\mathcal{F}}_g^2$ of compact Riemann surfaces of genus $g \geqslant2$ with a group of  automorphisms isomorphic to the dihedral group of order $4g-4$ such that the signature of the action is $(0; 2,2,2,2,2).$ If, in addition, $g-1$ is prime then the family is equisymmetric. 
\end{prop}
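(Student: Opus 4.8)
The plan is to mirror the proof of Proposition \ref{oja}: first produce the family by exhibiting a single surface-kernel epimorphism, and then, under the primality hypothesis, show that every such epimorphism lies in one $(\mathrm{Aut}(G)\times\mathscr{B})$-orbit.

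Set $q=g-1$ and write $G=\mathbf{D}_{2q}=\langle r,s\rangle$ for the dihedral group of order $4q=4g-4$, and let $\Delta$ be a Fuchsian group of signature $\sigma=(0;2,2,2,2,2)$ with canonical presentation $\langle x_1,\dots,x_5 : x_1^2=\cdots=x_5^2=x_1x_2x_3x_4x_5=1\rangle$. By the area formula and Riemann--Hurwitz, $|G|\,\mu(\Delta)=\mu(\Gamma)$ forces the kernel to have genus $g$, while the Teichm\"uller space of $\Delta$ has dimension $3\cdot 0-3+5=2$; hence any surface-kernel epimorphism $\Delta\to G$ produces a complex two-dimensional family. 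I would define $\Theta$ by sending four generators to reflections and one to the central involution $r^{q}$, for instance $\Theta(x_1)=s$, $\Theta(x_2)=\Theta(x_3)=sr$, $\Theta(x_4)=sr^{q}$, $\Theta(x_5)=r^{q}$; a direct check gives $\prod_i\Theta(x_i)=1$, each image is an involution (so the kernel is torsion-free and $\Theta$ is a surface-kernel epimorphism), and $s,sr$ already generate $G$, so $\Theta$ is onto. By the correspondence between group actions and epimorphisms recalled in Section \ref{preli}, this yields $\bar{\mathcal{F}}_g^2$. This part uses no arithmetic assumption on $q$.

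For the equisymmetry statement I would assume $q$ prime and show the interior is a single stratum by proving that all surface-kernel epimorphisms $\theta:\Delta\to G$ of signature $\sigma$ are topologically equivalent. Since $h=0$, the group $\mathscr{B}$ is generated by the braid transformations $\Phi_i$, and two epimorphisms are equivalent exactly when they lie in the same $(\mathrm{Aut}(G)\times\mathscr{B})$-orbit. The involutions of $G$ are the central element $r^{q}$ and the $2q$ reflections $sr^{i}$; applying the sign homomorphism $G\to\mathbb{Z}/2$ to the relation $\prod_i\theta(x_i)=1$ shows that an even number of the $\theta(x_i)$ are reflections. If none were, the product would be $r^{5q}=r^q\neq 1$; if exactly two were, say $sr^{i},sr^{j}$, then the three central factors force $j-i\equiv q$ and the image lies in the order-four group $\langle sr^{i},r^{q}\rangle$. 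Hence exactly four images are reflections and one equals $r^{q}$.

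After using a braid to move $r^{q}$ to the last slot, the data reduce to four exponents $a_1,a_2,a_3,a_4\in\mathbb{Z}_{2q}$ subject to $(a_2-a_1)+(a_4-a_3)\equiv q$ together with the generation condition that the differences $a_i-a_j$ and $q$ generate $\mathbb{Z}_{2q}$. The key structural observation is that $\mathrm{Aut}(G)$ acts on these exponents through the affine group $x\mapsto kx+\ell$ with $k\in(\mathbb{Z}_{2q})^{\times}$ and $\ell\in\mathbb{Z}_{2q}$, since the automorphism $r\mapsto r^{k},\,s\mapsto sr^{\ell}$ sends $sr^{i}\mapsto sr^{ki+\ell}$ and fixes $r^{q}$, while the braids permute and twist the four slots. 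The plan is to decompose $\mathbb{Z}_{2q}\cong\mathbb{Z}_2\times\mathbb{Z}_q$ (for $q$ an odd prime) and to exploit that the $\mathbb{Z}_q$-component is a configuration over a \emph{field}: there the affine group acts with very few orbits, the product constraint and generation condition then pinning everything down to the single tuple defining $\Theta$. Concluding that there is one orbit shows $\bar{\mathcal{F}}_g^2$ is irreducible with a single interior stratum, that is, equisymmetric.

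I expect this last step to be the crux: carrying out the normalisation of the four reflection-exponents up to the affine action and the braid relations, and verifying that primality of $q$ collapses all admissible generating tuples to one orbit. The delicate point is that the generation and product constraints must be shown to leave no stray orbit; for composite $q$ the divisibility structure of the exponents produces several inequivalent configurations, which is precisely what destroys equisymmetry and will reappear in Section \ref{noprimo}.
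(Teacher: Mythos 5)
Your existence argument is complete and essentially the paper's: the dimension count $3\cdot 0-3+5=2$, the Riemann--Hurwitz computation, and your tuple $(s,sr,sr,sr^{q},r^{q})$ --- whose product is indeed trivial, which preserves the periods (so the kernel is a surface group), and which generates $\mathbf{D}_{2q}$ --- differ from the paper's choice $(s,s,sr^{q+1},sr,r^{q})$ only in the representative epimorphism, which is immaterial. Your combinatorial lemma that exactly four of the $\theta(x_i)$ are reflections and one equals $r^{q}$ is also correct, and the parity argument via the sign homomorphism $G\to \mathbb{Z}/2$ is in fact tidier than the paper's case-by-case elimination of $l=2,3,4,5$ occurrences of $r^{q}$ (your parity count disposes of the case of five reflections automatically, which the paper leaves implicit).

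The genuine gap is in the equisymmetry half: you correctly reduce it to showing that all admissible exponent $4$-tuples $(a_1,a_2,a_3,a_4)$ with $(a_2-a_1)+(a_4-a_3)\equiv q \pmod{2q}$ form a single orbit under $\mathrm{Aut}(G)\times\mathscr{B}$, and then explicitly defer exactly this step (``I expect this last step to be the crux''). That step is the technical heart of the proposition, and your sketch underestimates two points. First, the affine action of $\mathrm{Aut}(\mathbf{D}_{2q})$ alone cannot collapse the configuration: after normalizing $a_1=0$ and rescaling one slot you are still left with a one-parameter family of epimorphisms $\theta_n$, and it is the braid moves, acting on exponents by $a\mapsto 2b-a$, that must finish the job (in the paper, the identity $\Phi_2^2\cdot\theta_n=\theta_{n-2}$ walks $\theta_n$ down to $\Theta$). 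Second, primality of $q$ enters at a precise spot your $\mathbb{Z}_2\times\mathbb{Z}_q$ plan does not isolate: since braid moves preserve parities and $s\mapsto sr$ flips them all, one normalizes so that $a_1,a_2,a_3$ are even and $a_4$ is odd with $a_1=0$, and then needs $a_4$ invertible modulo $2q$ --- true for an odd $a_4$ exactly when $a_4\neq q$, which forces a separate treatment of the degenerate case $a_4=q$ (there $a_2\equiv a_3$, and a different chain of an inner automorphism by $sr^{t/2}$, the braid $\Phi_2\circ\Phi_1$, and a rescaling by the inverse of $q+t$ is required). Your framework is sound and would support the paper's normalization, but as written the single-orbit claim --- hence the equisymmetry assertion --- is not proved.
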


\begin{proof} Set $q=g-1,$ and let $\Delta$ be a Fuchsian group of signature $\sigma=(0; 2,2,2,2,2)$ with canonical presentation $$\Delta=\langle x_1, x_2, x_3, x_4, x_5 : x_1^2=x_2^2=x_3^2=x_4^2=x_5^2=x_1x_2x_3x_4x_5=1 \rangle.$$ The epimorphism $\Theta: \Delta \to \mathbf{D}_{2q}=\langle r, s : r^{2q}=s^2=(sr)^2=1\rangle$ defined by $$\Theta(x_1)=s, \,\, \Theta(x_2)=s, \,\, \Theta(x_3)=sr^{q+1}, \,\, \Theta(x_4)= sr, \,\, \Theta(x_5)=r^{q}$$guarantees the existence of a complex two-dimensional family $\bar{\mathcal{F}}_g^2$ of compact Riemann surfaces $S$ of genus $g$ with a group of automorphisms $G$ isomorphic to $\mathbf{D}_{2q}$ acting on $S$ with signature $\sigma.$ 

We now assume $q$ to be prime and proceed to prove that $\bar{\mathcal{F}}_g^2$ is equisymmetric.  Let $\theta: \Delta \to \mathbf{D}_{2q}$ be an epimorphism representing an action of $G$ on $S.$ Note that the involutions of $G$ are $r^q$ and $sr^t$ for $1 \leqslant t \leqslant2q,$ and if some $\theta(x_i)$ equals $r^q$ then, after considering suitable braid automorphisms, it can be supposed $i=5.$

\s

We claim that one and only one among the elements $\theta(x_i)$ equals $r^q.$ Indeed, if $l$ denotes the number of elements $\theta(x_i)$ which are equal to $r^q,$ then clearly $l$ is different from 4 and 5  because otherwise $\theta$ is not surjective. If $l=3$ then it can be supposed $\theta(x_1x_2)=r^{q}.$ If we write $\theta(x_i)=sr^{t_i}$ then $t_2-t_1 \equiv q \mbox{ mod } 2q,$ showing that $t_2-t_1$ is odd and therefore, after considering the automorphism of $G$ given by $r \mapsto r, s \mapsto sr,$ we can assume that $t_1$ is even and that $t_2$ is odd. Now, we apply an appropriate inner automorphism of $G$ to suppose that $t_1\equiv 0 \mbox{ mod } 2q$ and  $t_2\equiv q \mbox{ mod }2q.$ The contradiction is obtained by noticing that $\langle s, r^q\rangle \cong C_2^2.$ Similarly, if $l=2$ then we can suppose $\theta(x_1x_2x_3)=1.$ If we write $\theta(x_i)=sr^{t_i}$ then $sr^{t_1-t_2+t_3}=1,$ which is not possible.

It follows that, up to equivalence, the epimorphism $\theta : \Delta \to \mathbf{D}_{2q}$ is given by $$\theta(x_1)=sr^{t_1}, \, \theta(x_2)=sr^{t_2}, \, \theta(x_3)=sr^{t_3}, \, \theta(x_4)= sr^{t_4}, \, \theta(x_5)=r^{q}$$for some $1 \leqslant t_1, \ldots, t_4 \leqslant 2q$ which satisfy $t_2-t_1+t_4-t_3 \equiv  q \mbox{ mod } 2q.$ Now, after considering, if necessary, braid automorphisms and the automorphism of $G$ given by $r \mapsto r, s \mapsto sr,$ we can suppose $t_1, t_2, t_3$ to be even and $t_4$ to be odd. Furthermore, after applying a suitable inner automorphism of $G$, we can assume that $t_1=0.$

If $t_4 \neq q$ then we apply the automorphism of $G$ given by $r \mapsto r^{l_4}, s \mapsto s,$ where $t_4l_4 \equiv 1 \mbox{ mod } 2q,$ to see that $\theta$ is equivalent to the epimorphism $\theta_n$ given by $$\theta_n(x_1)=s, \, \theta_n(x_2)=sr^{n}, \, \theta_n(x_3)=sr^{n+q+1}, \, \theta_n(x_4)= sr, \, \theta_n(x_5)=r^{q}$$for some $1 \leqslant n\leqslant 2q$ even. The equality $\Phi_{2}^2 \cdot \theta_{n} = \theta_{n-2}$ shows that $\theta_n$ is equivalent to $\theta_0=\Theta.$ Similarly, if now  $t_4 = q$ then  $t_2 \equiv t_3 \mbox{ mod } 2q;$ we write $t=t_2=t_3.$ We apply the inner automorphism of $G$ induced by $sr^{t/2}$ and then the braid automorphism $\Phi_{2} \circ \Phi_{1}$ to see that $\theta$ is equivalent to the epimorphism $\theta_t$ defined by $$\theta_t(x_1)=s, \, \theta_t(x_2)=s, \, \theta_t(x_3)=sr^{t}, \, \theta_t(x_4)= sr^{t+q}, \, \theta_t(x_5)=r^{q}$$where $t \not\equiv 0 \mbox{ mod } 2q.$ Finally, consider the automorphism of $G$ given by $r \mapsto r^l, s \mapsto s,$ where $(q+t)l \equiv  1 \mbox{ mod } 2q,$ to see that $\theta_t$ is equivalent to $\Theta.$
\end{proof}

%\begin{rema} The Riemann surfaces $S$ lying in the interior of the before introduced families are non-hyperelliptic. Indeed, their full automorphism groups do not contain central subgroups of order two, such that the corresponding quotient have genus zero.
%\end{rema}

\section{Proof of Theorem \ref{teo1}} \label{unicidad}

Set $q=g-1$ and let $S$ be a compact Riemann surface of genus $g \geqslant8$ with a group of automorphisms $G$ of order $4q$ where $q$ is prime. By the Riemann-Hurwitz formula the possible signatures of the action of $G$ on $S$ are  $$(1; 2), \, (0; 2,2,4,4) \, \mbox{ and } \, (0; 2,2,2,2,2).$$

By the classical  Sylow's theorems if $q \equiv 3 \mbox{ mod } 4$ then $G$ is isomorphic to either $${C}_{4q}, \,\, {C}_q \times {C}_2^2,\,\,  \mathbf{D}_{2q} \,\, \mbox{ or } \,\, \langle a,b : a^q=b^4=1, bab^{-1}=a^{-1}\rangle=C_q \rtimes_2 C_4,$$and if $q \equiv 1 \mbox{ mod } 4$ then, in addition to these groups, $G$ can be isomorphic to $$\langle a,b : a^q=b^4=1, bab^{-1}=a^{r} \rangle=C_q \rtimes_4 C_4$$where $r$ is a $4$-th primitive root of the unity in the field of $q$ elements. 

\s

The proof of Theorem \ref{teo1} is a consequence of the following three claims.

\s
 
{\bf Claim 1.} The following statements are equivalent.
\begin{enumerate}
\item $S$ is a compact Riemann surface of genus $g$ with a group of automorphisms of order $4q$ acting on $S$ with signature $\sigma=(0; 2,2,4,4).$
\item $g \equiv 2 \mbox{ mod } 4$ and  $S \in \bar{\mathcal{F}}_g^1.$ 
\end{enumerate}
\s

Let us assume that $S$ is a compact Riemann surface of genus $g$ with a group of automorphisms $G$ of order $4q$ acting on $S$ with signature $\sigma.$ Let $\Delta$ be a Fuchsian group of signature $\sigma$ with canonical presentation $$\Delta= \langle x_1, x_2, x_3, x_4 : x_1^2=x_2^2=x_3^4=x_4^4=x_1x_2x_3x_4=1 \rangle,$$ and assume the action of $G$ on $S$ to be represented by the epimorphism $\theta: \Delta \to G.$

First of all, note that $G$ cannot be isomorphic to $C_q \times C_2^2$ or $\mathbf{D}_{2q}$ because they do not have elements of order 4, and cannot be isomorphic to $C_{4q}$ because otherwise  $C_{4q}$ would be generated by elements of order 2 and 4. We claim that $G$ cannot be isomorphic to $C_q \rtimes_2 C_4$ either. Indeed, as $b^2$ is the unique involution of $C_q \rtimes_2 C_4$ and as its elements of order 4 are $a^tb$ and $a^tb^3$ for $1 \leqslant t \leqslant q,$ after considering the automorphism of $C_q \rtimes_2 C_4$ given by $a \mapsto a, b \mapsto b^{3},$ the epimorphism $\theta$ could only be defined either by
\begin{enumerate}
\item $\theta(x_1)=b^2, \, \theta(x_2)=b^2, \, \theta(x_3)=a^{t_1}b, \, \theta(x_4)=a^{t_2}b,$ or 
\item $\theta(x_1)=b^2, \, \theta(x_2)=b^2, \, \theta(x_3)=a^{t_1}b, \, \theta(x_4)=a^{t_2}b^3,$
\end{enumerate}for some $1 \leqslant t_1, t_2 \leqslant q$. The former case cannot yield an action because the image of $x_1x_2x_3x_4$ is different from $1$ for each possible choice of $t_1$ and $t_2$. Similarly, the latter case could give rise to an action only if $t_1 \equiv t_2 \mbox{ mod } q;$ however, in this case $\theta$ would not be surjective. 

All of the above shows that $g \equiv 2 \mbox{ mod } 4$ and that $G$ is isomorphic to $C_q \rtimes_4 C_4;$ consequently $S \in \bar{\mathcal{F}}_g^1$. The converse is direct, and the proof of the claim is done.

\s

{\bf Claim 2.} There is no a compact Riemann surface $S$ of genus $g$ with a group of automorphisms of order $4q$ acting on it with signature $(1;2).$

\s

Let $G$ be a group of order $4q$ and let $\Delta$ be a Fuchsian group of signature $(1;2).$ For every torsion-free kernel epimorphism $$\Delta= \langle a_1, b_1, x_1 : [a_1, b_1] x_1=x_1^2=1 \rangle \to G,$$ the image of $x_1$ must belong to the commutator subgroup of $G.$ Thus, to conclude suffice to notice that the commutator subgroup of each group of order $4q$ does not contain involutions.

\s

{\bf Claim 3.}   The following statements are equivalent.
\begin{enumerate}
\item $S$ is a compact Riemann surface of genus $g$ with a group of automorphisms of order $4q$ acting on $S$ with signature $\sigma=(0;2,2,2,2,2).$
\item  $S \in \bar{\mathcal{F}}_g^2.$ 
\end{enumerate}
\s

Let us assume that $S$ is a compact Riemann surface of genus $g$ with a group of automorphisms $G$ of order $4q$ acting on $S$ with signature $\sigma.$ Let $\Delta$ be a Fuchsian group of signature $\sigma$ with canonical presentation $$\Delta= \langle x_1, x_2, x_3, x_4, x_5: x_1^2=x_2^2=x_3^2=x_4^2=x_5^2=x_1x_2x_3x_4x_5=1 \rangle,$$ and assume the action of $G$ on $S$ to be represented by the epimorphism $\theta: \Delta\to G.$  

The group $G$ cannot be isomorphic to $C_q \rtimes_2 C_4$ because it has a unique involution and therefore every homomorphism $\Delta \to C_q \rtimes_2 C_4$ is not surjective. Similarly, the group $G$ cannot be isomorphic to $C_q \rtimes_4 C_4$ because it has exactly $q$ involutions and all of them are contained in the proper subgroup $\langle a, b^2 \rangle \cong \mathbf{D}_{q}.$ Finally, if $G$ were abelian then $G$ would be isomorphic to a subgroup of $C_2^4;$ but this is not possible. 

All the above ensures that $G$ is isomorphic to the dihedral group of order $4q$ and therefore $S \in \bar{\mathcal{F}}_g^2$. The converse is direct, and the proof of the claim is done.

\section{Proof of Theorem \ref{frontera}} \label{fronterac}

Let $g \geqslant8$ such that $q=g-1$ is prime. Assume $g \equiv 2 \mbox{ mod }4$ and let $S$ be a compact Riemann surface lying in the family $\bar{\mathcal{F}}_g^1.$ We recall that $S$ has a group of automorphisms $G_1$ isomorphic to $$\langle a,b : a^q=b^4=1, bab^{-1}=a^r \rangle=C_q \rtimes_4 C_4,$$where $r$
 is a $4$-th primitive root of the unity in the field of $q$ elements, and that the action is represented by the epimorphism $\Theta: \Delta \to C_{q} \rtimes_4 C_4$ defined by $$\Theta(x_1)=b^2, \,\, \Theta(x_2)=ab^2, \,\, \Theta(x_3)=ab, \,\,  \Theta(x_4)=b^3$$where $\Delta=\langle x_1, x_2, x_3, x_4 : x_1^2=x_2^2=x_3^4=x_4^4=x_1x_2x_3x_4=1 \rangle.$

By classical results due to Singerman \cite{singerman2}, the action of $G_1$ on a (generic) member $S$ of $\bar{\mathcal{F}}_g^1$ can be possibly extended to only an action of a group of order $8q$  with signature $\hat{\sigma}=(0; 2,2,2,4).$ However, as proved in \cite{BJ} for $g \geqslant 18$ and in \cite{C2} for  $g=14,$  there is no compact Riemann surfaces of genus $g$ with a group of automorphisms of order $8q$ acting with signature $\hat{\sigma}$. It follows that:
\begin{enumerate}
\item the interior $\mathcal{F}_g^1$ of the family $\bar{\mathcal{F}}_g^1$ consists of those Riemann surfaces for which $G_1$ agrees with the full automorphism group, and
\item the boundary $$\partial(\bar{\mathcal{F}}_g^1)=\bar{\mathcal{F}}_g^1\setminus \mathcal{F}_g^1=\{ X \in\bar{\mathcal{F}}_g^1 : G_1 \subsetneqq \mbox{Aut}(X) \}$$ consists of finitely many points.
\end{enumerate}

  Note that for each $X \in \partial(\bar{\mathcal{F}}_g^1)$ its full automorphism group has order $\mu q$ where $4$ divides $\mu.$ Then, following \cite{BJ}, there exists a positive integer $\epsilon_1$ such that if $g \geqslant\epsilon_1$ then  either: 
\begin{enumerate}
\item $\mbox{Aut}(X) \cong C_q \rtimes_8 C_8$ acting  with signature $(0;2 ,8,8)$, for  $g \equiv 2 \mbox{ mod }8;$ or 
\item  $\mbox{Aut}(X) \cong (C_q \rtimes_6 C_6) \times C_2$ acting with signature $(0; 2,6,6),$ for $g \equiv 2 \mbox{ mod }3.$ 
\end{enumerate}

The latter case is not possible because $(C_q \rtimes_6 C_6) \times C_2$ does not have elements of order 4, showing that if $g \not\equiv 2 \mbox{ mod } 8$ then $\partial(\bar{\mathcal{F}}_g^1)$ is empty. Let us now assume that $g \equiv 2 \mbox{ mod }8,$ and let $$\Delta'=\langle y_1, y_2, y_3 : y_1^2=y_2^8=y_3^8=y_1y_2y_3=1 \rangle$$be a Fuchsian group of signature $(0; 2,8,8).$ Again, following \cite{BJ}, there are exactly two non-isomorphic  Riemann surfaces $X_1$ and $X_2$ of genus $g \geqslant 18$ such that  $$\mbox{Aut}(X_i) \cong \langle \alpha, \beta: \alpha^{q}=\beta^8=1, \beta \alpha \beta^{-1}=\alpha^{u} \rangle=C_{q} \rtimes_8 C_8,$$where $u$ is a $8$-th primitive root of the unity in the field of $q$ elements, and the action of $\mbox{Aut}(X_i)$ on $X_i$ is determined by the epimorphisms $\Theta_i: \Delta' \to C_{q} \rtimes_8 C_8$  $$\Theta_1(y_1)=\beta^4, \,\, \Theta_1(y_2)=\alpha^{-u}\beta, \,\, \Theta_1(y_3)=\alpha\beta^3$$ $$\Theta_2(y_1)=\beta^4, \,\, \Theta_2(y_2)=\alpha^{u}\beta, \,\, \Theta_2(y_3)=\alpha\beta^7.$$

The subgroup of $\Delta'$ generated by the elements $$\hat{x}_1=y_1, \,\, \hat{x}_2=y_1, \,\, \hat{x}_3=y_3^2, \,\, \hat{x}_4=y_3^6$$is isomorphic to $\Delta,$ and $$\Theta_i(\hat{x}_1)= \beta^4    , \, \Theta_i(\hat{x}_2)= \beta^4   , \,\Theta_i(\hat{x}_3)= \alpha^{1+(-1)^{i+1}u^3}\beta^6  , \,\Theta_i(\hat{x}_4)=\alpha^{u^3+(-1)^iu^2} \beta^7    $$
%
% $$\Theta_2(\hat{x}_1)=\beta^4    , \, \Theta_2(\hat{x}_2)=\beta^4    , \,\Theta_2(\hat{x}_3)= \alpha^{1-u^3}\beta^6   , \, \Theta_2(\hat{x}_4)= \alpha^{u^3+u^2}\beta^7.$$
%
%
Thus, for $i=1,2,$ the restriction of $\Theta_i$ to $\Delta \cong \langle \hat{x}_1, \ldots, \hat{x}_4 \rangle$ $$\Delta \to \langle \beta^4, \alpha^{1\pm u^3} \beta^6 \rangle = \langle \alpha, \beta^2 \rangle=C_q \rtimes_4 C_4 $$ defines an action $C_q \rtimes_4 C_4$ on $X_i$ with signature $(0;2,2,4,4),$ showing that  $\partial(\bar{\mathcal{F}}_g^1)$ agrees with $\{X_1, X_2\}$. 

\s

Now, let $S$ be a compact Riemann surface lying in the family $\bar{\mathcal{F}}_g^2.$ We recall that $S$ has a group of automorphisms $G_2$ isomorphic to $$\langle r, s : r^{2q}=s^2=(sr)^2=1\rangle = \mathbf{D}_{2q},$$ and that the action of $G_2$ on $S$ is represented  by the epimorphism $\Theta: \Delta \to \mathbf{D}_{2q}$ $$\Theta(x_1)=s, \,\, \Theta(x_2)=s, \,\, \Theta(x_3)=sr^{q+1}, \,\, \Theta(x_4)= sr, \,\, \Theta(x_5)=r^{q}$$where $\Delta=\langle x_1, x_2, x_3, x_4 : x_1^2=x_2^2=x_3^2=x_4^2=x_5^2=x_1x_2x_3x_4x_5=1 \rangle.$ 

 \s

By \cite{singerman2} the action of $G_2$ on a generic member $S$ of $\bar{\mathcal{F}}_g^2$ cannot be extended. Thus:

\begin{enumerate}
\item  the interior $\mathcal{F}_g^2$ of the family $\bar{\mathcal{F}}_g^2$ consists of those Riemann surfaces for which $G_2$ agrees with the full automorphism group, and
\item the boundary $$\partial(\bar{\mathcal{F}}_g^2)=\bar{\mathcal{F}}_g^2 \setminus \mathcal{F}_g^2=\{ Y \in\bar{\mathcal{F}}_g^2 : G \subsetneqq \mbox{Aut}(Y) \}$$consists of finitely many points and  finitely many one-dimensional families.
\end{enumerate}

By \cite{BJ}, there exists $\epsilon_2$ such that if $g \geqslant\epsilon_2$ and $Y \in \partial(\bar{\mathcal{F}}_g^2)$ then either\begin{enumerate}
\item $\mbox{Aut}(Y) \cong C_q \rtimes_8 C_8$ acting with signature $(0;2 ,8,8)$, for  $g \equiv 2 \mbox{ mod }8,$ or 
\item  $\mbox{Aut}(Y) \cong (C_q \rtimes_6 C_6) \times C_2$ acting with signature $(0; 2,6,6),$ for $g \equiv 2 \mbox{ mod }3.$ 
\end{enumerate}

The former case is not possible because $C_q \rtimes_8 C_8$ does not have elements of order $2q$; thus, if $g \not\equiv 2 \mbox{ mod } 3$ then  $\partial(\bar{\mathcal{F}}_g^2)$ is empty. Let us now assume that $g \equiv 2 \mbox{ mod }3,$ and let $\Delta'$ be a Fuchsian group of signature $(0; 2,6,6)$ with canonical presentation $$\Delta'=\langle y_1, y_2, y_3 : y_1^2=y_2^6=y_3^6=y_1y_2y_3=1 \rangle.$$

Again, following \cite{BJ}, for $g \geqslant  \epsilon_2$ there are exactly two non-isomorphic  Riemann surfaces $Y_1$ and $Y_2$ of genus $g$ with full automorphism group isomorphic to$$ \langle \alpha,\beta,\gamma : \alpha^q=\beta^6=\gamma^2= [\gamma,\alpha]=[\gamma,\beta]=1,  \beta \alpha \beta^{-1}=\alpha^{u} \rangle=(C_q \rtimes_6 C_6) \times C_2,$$where $u$ is a $6$-th primitive root of the unity in the field of $q$ elements, and the action of $\mbox{Aut}(Y_i) $ on $Y_i$ is determined by the epimorphisms $\Theta_i: \Delta' \to (C_q \rtimes_6 C_6) \times C_2$ $$\Theta_1(y_1)=\beta^3, \,\, \Theta_1(y_2)=\alpha^{-u}\beta \gamma, \,\, \Theta_1(y_3)=\alpha\beta^2 \gamma$$ $$\Theta_2(y_1)=\beta^3 \gamma, \,\, \Theta_2(y_2)=\alpha^{-u^2}\beta^2 \gamma, \,\, \Theta_2(y_3)=\alpha\beta.$$

The subgroup of $\Delta'$ generated by $$\tilde{x}_1=y_3^3, \,\, \tilde{x}_2=y_1, \,\, \tilde{x}_3=y_2y_1y_2^{-1}, \,\, \tilde{x}_4=y_2^2y_1y_2^{-2}, \,\, \tilde{x}_5=y_2^3$$is isomorphic to $\Delta,$ and $$\Theta_1(\tilde{x}_1)= \gamma, \, \Theta_1(\tilde{x}_2)= \beta^3, \,\Theta_1(\tilde{x}_3)=\alpha^{-2u}\beta^3, \,\Theta_1(\tilde{x}_4)=\alpha^{2-4u}\beta^3, \, \Theta_1(\tilde{x}_5)=\alpha^{2-2u}\beta^3 \gamma$$ $$\Theta_2(\tilde{x}_1)=\alpha^{2u}\beta^3, \, \Theta_2(\tilde{x}_2)=\beta^3 \gamma, \,\Theta_2(\tilde{x}_3)=\alpha^{-2u^2}\beta^3 \gamma, \, \Theta_2(\tilde{x}_4)=\alpha^2\beta^3\gamma, \, \Theta_2(\tilde{x}_5)=\gamma.$$

Thus, for $i=1,2,$ the restriction of $\Theta_i$ to $\Delta \cong \langle \tilde{x}_1, \ldots, \tilde{x}_4 \rangle$ $$\Delta \to  \langle \alpha \gamma, \beta^3 \rangle=\mathbf{D}_{2q}$$ defines an action $\mathbf{D}_{2q}$ on $Y_i$ with signature $(0;2,2,2,2,2),$ showing that  $\partial(\bar{\mathcal{F}}_g^2)$ agrees with $\{Y_1, Y_2\}$. 

The proof is done.

\section{Proof of Theorem \ref{teo3}} \label{jaco}

Let $g \geqslant8$ such that $q=g-1$ is prime.

We recall the well-known fact that the dihedral group $$\langle r,s : r^{2q}=s^2=(sr)^2=1\rangle=\mathbf{D}_{2q}$$has, up to equivalence, 4 complex irreducible representations of degree one; namely,$$U_1^{\pm}: r \mapsto 1, \, s \mapsto \pm 1, \, \, \, \, \,U_2^{\pm}: r \mapsto -1, \, s \mapsto \pm 1, $$and $q-1$ complex irreducible representations of degree two; namely,
\begin{displaymath}
V_{j} : \, r \mapsto \mbox{diag}(\omega_{2q}^j, \bar{\omega}_{2q}^j) 
 ,\  s \mapsto \left( \begin{smallmatrix}
0 & 1\\
1 & 0 \\
\end{smallmatrix} \right)
\end{displaymath}for $1 \leqslant j  \leqslant q-1$ and $\omega_t=\mbox{exp}(\tfrac{2 \pi i}{t}).$ See, for example, \cite[p. 36]{Serre}.

\s

Following the notations introduced in Subsection \ref{jacos}, the trivial representation $U_1^{+}$ does not belong to $\mathfrak{J}_{\theta},$ where $\theta$ represents the action of $\mathbf{D}_{2q}$ on each member $S$ of the family $\bar{\mathcal{F}}_g^2.$ The following table summarizes the dimension of the vector subspaces of the non-trivial complex irreducible representations of $\mathbf{D}_{2q}$ fixed under the action of the subgroups $\langle r \rangle, \langle s \rangle$ and $\langle sr \rangle$.

\s
\begin{center}
\scalebox{0.7}{
\begin{tabular}{|c|c|c|c|c|c|}  \hline
\, &  $U_1^{-}$ & $U_2^{+}$  & $U_2^{-}$  & $V_j$ \\ \hline
$\langle s \rangle$  & $0$ & $1$ & $0$ & $1$ \\ \hline
$\langle r \rangle$  &  1 & 0 & 0 & $0$ \\ \hline
$\langle sr \rangle$  & 0 & 0 & 1 & $1 $ \\ \hline
\end{tabular}}
\end{center}

\s

It follows that the collection $\{ \langle r \rangle, \langle s \rangle, \langle sr \rangle\}$ is admissible and therefore, by \cite{kanirubiyo}, if $S \in \bar{\mathcal{F}}_g^2$ then there exists an abelian subvariety $P$ of $JS$ such that $$JS \sim JS_{\langle s \rangle} \times  JS_{\langle r \rangle} \times JS_{\langle sr \rangle} \times P.$$

The  covering maps given by the action of $\langle s \rangle, \langle r \rangle$ and $\langle sr \rangle$ ramify over six, two and two values respectively; then, the Riemann-Hurwitz formula implies that $$g_{S_{\langle s \rangle}}=\tfrac{q-1}{2}, \,\,  g_{S_{\langle r \rangle}}=1 \,\, \mbox{ and } \,\, g_{S_{\langle sr \rangle}}=\tfrac{q+1}{2}.$$ 

It follows that $P=0$ and the desired decomposition is obtained.
\begin{rema}
Note that if $S \in \bar{\mathcal{F}}_g^2$ then $JS$ contains an elliptic curve.
\end{rema}

We now assume $g \equiv 2 \mbox{ mod }4.$ Let $r$ be a $4$-th primitive root of the unity in the field of $q$ elements, write $m=\tfrac{q-1}{4}$ and choose $1 \leqslant k_1, \ldots, k_m \leqslant q-1$ such that \begin{equation*}  \{1, \ldots, q-1\} = \sqcup_{j=1}^m \{\pm k_j, \pm rk_j \},\end{equation*} where the symbol $\sqcup$ stands for disjoint union. Then $$\langle a,b : a^q = b^4 = 1, bab^{-1}=a^r \rangle =C_q \rtimes_4 C_4$$has, up to equivalence, $m$ complex irreducible representations of degree 4, given by
\begin{displaymath}
V_{j} : a \mapsto \mbox{diag}(\omega_{q}^{k_j}, \omega_{q}^{k_jr}, \omega_{q}^{-k_j}, \omega_{q}^{-k_jr}),  \,\,\,\,\, b \mapsto \left( \begin{smallmatrix}
0 & 1 & 0 & 0 \\
0 & 0 & 1 & 0 \\
0 & 0 & 0 & 1 \\
1 & 0 & 0 & 0 \\
\end{smallmatrix} \right)  \mbox{ where } \omega_t=\mbox{exp}(\tfrac{2 \pi i}{t}),
\end{displaymath}and four complex irreducible representations of degree 1, given by $$U_l : a \mapsto 1, \, b \mapsto \omega_4^l, \, \mbox{ for } \, 0 \leqslant l \leqslant 3$$(see, for example, \cite[p. 62]{Serre}). Choose four pairwise different integers $t_1, t_2, t_3, t_4 \in \{1, \ldots, q-1\},$ and consider the following subgroups of $C_q \rtimes_4 C_4$ $$\langle a \rangle = C_q  \mbox{ and } \langle a^{t_i}b \rangle = C_4 \mbox{ for } 1 \leqslant i \leqslant 4.$$

The trivial representation $U_0$ does not belong to $\mathfrak{J}_{\theta},$ where $\theta$ represents the action of $C_q \rtimes_4 C_4$ on each member $S$ of the family $\bar{\mathcal{F}}_g^1.$ The  dimension of the vector subspaces of the non-trivial complex irreducible representations of $C_q \rtimes_4 C_4$ fixed under the action of the subgroups $\langle a \rangle$ and $\langle a^{t_i}b \rangle$ is: $$ U_l^{\langle a \rangle}=V_j^{\langle a^{t_i}b \rangle }=1 \,\,\, \mbox{ and } \,\,\, U_l^{\langle a^{t_i}b \rangle}=V_j^{\langle a \rangle}=0.$$

Thus $\{ \langle a \rangle, \langle a^{t_1}b \rangle, \ldots, \langle a^{t_4}b \rangle\}$ is admissible and therefore, by \cite{kanirubiyo},  if $S \in \bar{\mathcal{F}}_q^1$ then $$JS \sim JS_{\langle a \rangle} \times \Pi_{i=1}^4JS_{\langle a^{t_i}b \rangle} \times Q \cong JS_{\langle a \rangle} \times (JS_{\langle b \rangle})^4 \times Q ,$$for some abelian subvariety $Q$ of $JS$, where the isomorphism follows  after noticing that each $\langle a^{t_i}b\rangle$ and $\langle b \rangle$ are conjugate. The covering map $S \to S_{\langle a \rangle}$ is unbranched, and the covering map $S \to S_{\langle b \rangle}$ ramifies over four values, two marked with 2 and two marked with 4. Then, the Riemann-Hurwitz formula implies that $$g_{S_{\langle a \rangle}}=2 \, \mbox{ and } \, g_{S_{\langle b \rangle}}=m.$$

Thereby, $Q=0$ and the decomposition of $JS$ stated in Theorem \ref{teo3} is done.

\section{The case $g-1$ not prime} \label{noprimo}

\subsection*{Example 1.} Let $n \geqslant3$ be an integer, and consider the group $$\langle x,y,z : x^4=z^n=1, x^2=y^2, yxy^{-1}=x^3, [x,z]=[y,z]=1\rangle = \mathbf{Q}_8 \times C_n$$where $\mathbf{Q}_8$ denotes the quaternion group, and let $\Delta$ be a Fuchsian group of signature $(1;2)$ with canonical presentation $\Delta= \langle a_1, b_1, x_1 : [a_1, b_1]x_1=x_1^2=1 \rangle.$ For each $n$ odd, the epimorphism $\Delta \to \mathbf{Q}_8 \times C_n$ given by $a_1 \mapsto x, \, b_1 \mapsto yz, \, x_1 \mapsto y^2$ guarantees the existence of a complex one-dimensional family of compact Riemann surfaces of genus $g=1+2n$ with a group of automorphisms of order $8n=4g-4$ isomorphic to $\mathbf{Q}_8 \times C_n$ acting with signature $(1;2).$

\subsection*{Example 2.}  \label{angel} Let $n \geqslant3.$ Choose $m \in \{\pm 1, 2^{n-1}\pm 1\},$ consider the group  $$\langle r,s,t : r^{2^n}=s^2=(sr)^2=t^2=1, trt=r^m, tst=s \rangle = \mathbf{D}_{2^n} \rtimes C_2,$$and let $\Delta$ be a Fuchsian group of the  signature $\sigma=(0;2,2,2,2,2)$ with  presentation $$\Delta=\langle x_1, x_2, x_3, x_4,  x_5 : x_1^2=x_2^2=x_3^2=x_4^2=x_5^2=x_1x_2x_3x_4x_5=1 \rangle.$$

The epimorphism $\Delta \to \mathbf{D}_{2^n} \rtimes C_2$ given by $$x_1 \mapsto sr, \, x_2 \mapsto sr, \, x_3 \mapsto s, \, x_4 \mapsto t, x_5 \mapsto st$$guarantees the existence of a complex two-dimensional family of compact Riemann surfaces of genus $g=2^n+1,$ with a group of automorphisms of order $2^{n+2}=4g-4$ isomorphic to $\mathbf{D}_{2^n} \rtimes C_2$ acting with signature $\sigma.$ Two different choices of $m$ yield non-isomorphic groups, showing that if $g-1=2^n$ then there exist at least four pairwise non-isomorphic groups of order $4g-4$ acting on compact Riemann surfaces of genus $g$ with the same signature $\sigma.$

\section*{Acknowledgments}
The author is grateful to his colleague Angel Carocca who generously told him how to construct Example 2 in Section \ref{noprimo}.

\end{document}